\numberwithin{equation}{section}
\newtheorem{thm}{Theorem}
\theoremstyle{definition}
\theoremstyle{remark}
\definecolor{shadecolor}{gray}{0.75}
\def\norm#1.#2.{\lVert#1\rVert_{#2}}
\def\Norm#1.#2.{\bigl\lVert#1\bigr\rVert_{#2}}
\def\NOrm#1.#2.{\Bigl\lVert#1\Bigr\rVert_{#2}}
\def\NORm#1.#2.{\biggl\lVert#1\biggr\rVert_{#2}}
\def\NORM#1.#2.{\Biggl\lVert#1\Biggr\rVert_{#2}}
\begin{document}
\title[$K_\sigma$ sets associated to trees not satisfying a separation condition]{Remarks on the construction of $K_\sigma$ sets associated to trees not satisfying a separation condition}

\author[Paul  Hagelstein]{Paul Hagelstein}
\address{P. H.: Department of Mathematics, Baylor University, Waco, Texas 76798}
\email{\href{mailto:paul_hagelstein@baylor.edu}{paul\_hagelstein@baylor.edu}}
\thanks{P. H. is partially supported by Simons Foundation grant MP-TSM-00002046.}

\author[Blanca Radillo-Murguia]{Blanca Radillo-Murguia}
\address{B. R.-M.: Department of Mathematics, Baylor University, Waco, Texas 76798}
\email{\href{mailto:blanca_radillo1@baylor.edu}{blanca\_radillo1@baylor.edu}}

\author{Alex Stokolos}
\address{A. S.: Department of Mathematical Sciences, Georgia Southern University, Statesboro, Georgia 30460}
\email{\href{mailto:astokolos@GeorgiaSouthern.edu}{astokolos@GeorgiaSouthern.edu}}

\subjclass[2020]{Primary 42B25}
\keywords{maximal functions, differentiation basis}

\maketitle

\begin{abstract}  $K_\sigma$ sets involving sticky maps $\sigma$ have been used in the theory of differentiation of integrals to probabilistically construct Kakeya-type sets that imply  certain types of  directional maximal operators are unbounded on $L^p(\mathbb{R}^2)$ for all $1 \leq p < \infty$.  We indicate limits to this approach by showing that, given $\epsilon > 0$ and a natural number $N$, there exists a tree $\mathcal{T}_{N, \epsilon}$ of finite height  that is lacunary of order $N$ but such that, for \emph{every} sticky map $\sigma: \mathcal{B}^{h(\mathcal{T}_{N, \epsilon})} \rightarrow \mathcal{T}_{N, \epsilon}$, one has $|K_{\sigma} \cap ((1,2) \times \mathbb{R})| \geq 1 - \epsilon$.  
\end{abstract}
\mbox{}

\section{Introduction}

Let $\Omega$ be a nonempty subset of  $[0,1]$.   Associated to $\Omega$ is the \emph{directional maximal operator} $M_\Omega$ acting on measurable functions on $\mathbb{R}^2$ defined by
$$M_\Omega f(x) = \sup_{x \in R}\frac{1}{|R|}\int_R |f|\;,$$
where the supremum is taken over all rectangles in $\mathbb{R}^2$ containing $x$ with longest side having slope in $\Omega$.  

If $\Omega = [0,1]$, the maximal operator $M_\Omega$ is unbounded on $L^p(\mathbb{R}^2)$ for all $1 \leq p < \infty$ \cite{ busemannfeller, nikodym}.   If $\Omega$ is the lacunary set $\{2^{-j} : j \in \mathbb{N}\}$, then $M_\Omega$ is bounded on $L^p(\mathbb{R}^2)$ for $1 < p \leq \infty$ \cite{cf2, nsw, stromberg}.  More generally, if $\Omega$ is $N$-lacunary, then $M_\Omega$ is bounded on $L^p(\mathbb{R}^2)$ for $1 < p \leq \infty$ \cite{sjsj}.

In \cite{batemankatz}, Bateman and Katz utilized probabilistic methods involving \emph{sticky maps} to show that the maximal operator $M_{\mathcal{C}}$ associated to the ternary Cantor set $\mathcal{C}$ is unbounded on $L^p(\mathbb{R}^2)$ for $1 \leq p < \infty$.  Bateman subsequently announced a result in \cite{bateman} that the maximal operator $M_\Omega$ is bounded on $L^p(\mathbb{R}^2)$ for all $1 < p \leq \infty$ if and only if $\Omega$ is a finite union of sets of finite lacunary order.   Bateman's clever argument involved using probabilistic methods to show that, if $\Omega$ were not of finite lacunary order, then for every $N \in \mathbb{N}$ there would exist a sticky map $\sigma$ and associated sets $K_\sigma = K_{\sigma, 1} \cup K_{\sigma, 2}$ (where the $K_{\sigma, j}$ are of positive measure and  with a structure that we will detail in the next section) such that $|K_{\sigma, 1}| \gtrsim (\ln N) |K_{\sigma,2}|$, with $M_\Omega \chi_{K_{\sigma, 2}} \gtrsim \frac{1}{2}$ on $K_{\sigma, 1}$.  Unfortunately, we recently discovered a subtle gap in the proof of this statement in the case that the set $\Omega$, although not of finite lacunary order, fails to satisfy a \emph{separation condition}, although the proof does hold with some minor modification if the separation condition is satisfied \cite{hrs2024}.   We now recognize that  there exist certain non-separated non-finite lacunary sets $\Omega$ for which the desired $K_\sigma$ sets simply do not exist.   The purpose of this paper is to show this is the case.

In Section 2 we will define the appropriate terminology, largely following that of Bateman in \cite{bateman}.   In Section 3 we will construct a set of directions $\Omega \subset [0,1]$ for which the desired $K_\sigma$ sets do not exist.  In Section 4 we will suggest further directions for research in this area.
\\

We wish to thank the referee for the helpful comments and suggestions regarding this  paper.

\section{Terminology}

In this section we, largely following the terminology and setup of Bateman in \cite{bateman}, define sets $K_\sigma$ associated to sticky maps $\sigma$ mapping a truncated binary tree to itself.

We first define the binary tree $\mathscr{B}$.   We fix a vertex $v_0$, called the origin, and define $\mathscr{B}_0 = \{v_0\}$.    Suppose $\mathscr{B}_n$ has been defined.   To each vertex $v \in \mathscr{B}_n$ we associate two  new vertices $c_0(v)$ and $c_1(v)$ and define $$\mathscr{B}_{n+1} = \cup_{v \in \mathscr{B}_n}\left\{c_0(v), c_1(v)\right\}\;.$$   We define the binary tree $\mathscr{B}$
to be the graph with vertices in $\cup_{n=0}^\infty \mathscr{B}_{n}$ and edges connecting a vertex $v$ with each of its children $c_0(v)$ and $c_1(v)$.  We say the vertices in $\mathscr{B}_n$ are of height $n$.   If the vertex $v$ is of height $n$, we may write this as $h(v) = n$.

Given a vertex $v \in \mathscr{B}$, we define a ray $R$ rooted at $v$ to be an ordered set of vertices $v_1 = v, v_2, v_3, \ldots$ such that $v_{j+1}$ is a child of $v_j$ for $j=1,2,\ldots$.   Given a subtree $\mathscr{T}$ of $\mathscr{B}$ and a vertex $v \in \mathscr{T}$, we set $\mathfrak{R}_\mathscr{T}(v)$ to be the collection of all rays rooted at $v$ with vertices in $\mathscr{T}$.   

If $u \in R$ for some $R \in \mathfrak{R}_{\mathscr{T}}(v)$, we say $u$ is a descendant of $v$ or that $v$ is an ancestor of $u$.

Given a subtree $\mathscr{T}$ of $\mathscr{B}$ and  $h \in \mathbb{N}$, by $\mathscr{T}^h$ we denote the induced subtree of $\mathscr{T}$ associated to its vertices of height less than or equal to $h$.

Given a subtree $\mathscr{T}$ of $\mathscr{B}$, we say a vertex $v \in \mathscr{T}$ splits, or we say $v$ is a splitting vertex, if $v$ has two children in $\mathscr{T}$.   We define the splitting number $\textup{split}(R)$ of a ray $R$ in $\mathscr{T}$ to be the number of splitting vertices in $\mathscr{T}$ on $R$.   The splitting number of a vertex $v$ with respect to a tree $\mathscr{S}$ rooted at $v$ is defined as
$$\textup{split}_\mathscr{S}(v) = \min_{R \in \mathfrak{R}_\mathscr{S}(v)} \textup{split}(R)\;,$$
and the splitting number of $v$ is defined as
$$\textup{split}(v) = \sup_\mathscr{S} \textup{split}_\mathscr{S}(v)\;,$$
where the supremum is taken over all subtrees $\mathscr{S}$ of $\mathscr{T}$ rooted at $v$.  For a tree $\mathscr{T}$, we set
$$\textup{split}(\mathscr{T}) = \sup_{v \in \mathscr{T}} \textup{split}(v)\;,$$
where the supremum is taken over all the vertices $v$ in $\mathscr{T}$.

A tree $\mathscr{T} \subset \mathscr{B}$ is said to be lacunary of order $0$ if it consists of a single ray (possibly truncated to be of finite height) rooted at the origin of $\mathscr{B}$.  For $N \geq 1$,  $\mathscr{T}$ is said to be lacunary of order $N$ if all of the splitting vertices of $\mathscr{T}$ lie on a lacunary tree of order $N-1$ and moreover that $\mathscr{T}$ is not lacunary of order $N-1$.  

Let $\mathscr{T} \subset \mathscr{B}$ be lacunary of order $N$.   We say $\mathscr{T}$ is pruned provided, for every ray $R \in \mathfrak{R}_\mathscr{T}(v_0)$ and every $j = 1, 2, \ldots, N$, $R$ contains exactly one vertex $v_j$ such that $\textup{split } v_j = j$.
 
Let $v$ be a vertex in $\mathscr{B}$ of height $k$.  Let $(j_1, \ldots, j_k)$ be a sequence of $0$'s and $1$'s such that, letting $v_0$ denote the origin, $v$ lies on the ray $v_0, v_1, v_2, \ldots, v_k = v, \ldots$ in $\mathscr{B}$ such that $v_i = c_{j_i}(v_{i-1})$ for $i = 1, \ldots, k$.  For notational convenience, we will on occasion denote $v$ by the $(k + 1)$-string $0 j_1 \cdots j_k$, with $v_0$ itself being denoted simply by the 1-string 0.

Let $\sigma: \mathscr{B}^M \rightarrow \mathscr{B}^M$.  $\sigma$ is said to be a \emph{sticky map} if  $h(\sigma(v)) = h(v)$ for all $v \in \mathscr{B}^M$ and $\sigma(u)$ is an ancestor of $\sigma(v)$ whenever $u,v \in \mathscr{B}^M$ and $u$ is an ancestor of $v$.
\\

To each $\sigma: \mathscr{B}^M \rightarrow \mathscr{B}^M$ we may construct a set $K_\sigma \subset \mathbb{R}^2$ as follows. 
\\

For every $(M+1)$-string $v = 0 j_1 \cdots j_M$ consisting of $0$'s and $1$'s, let $(k_1, \ldots, k_M)$ be such that $\sigma(v) = 0 k_1 \cdots k_M$.   Let $\rho_v$ denote the parallelogram with vertices at the points $\left(0, \sum_{i=1}^M 2^{-i}j_i \right)$, $\left(0, 2^{-M} + \sum_{i=1}^M 2^{-i}j_i \right)$, 
$\left(2, \sum_{i=1}^M 2^{-i}j_i + 2\sum_{i=1}^M 2^{-i}k_i\right)$, $\left(2,  2^{-M} + \sum_{i=1}^M2^{-i}j_i  +   2\sum_{i=1}^M 2^{-i}k_i\right)$.  Define the set $K_\sigma \subset \mathbb{R}^2$ by
$$K_\sigma = \bigcup_{v \in \mathscr{B} : h(v) = M} \rho_v\;.$$

Note that if $v$ is a vertex of height $M$ in $\mathscr{B}^M$, the parallelogram $\rho_v \subset \mathbb{R}^2$  has a left-hand side that is an interval on the $y$-axis of  length $2^{-M}$ whose lowest point has a $y$-coordinate  in $[0,1]$ with binary expansion given by $v$.
\\

The primary result of this paper is the following.

\begin{thm}\label{t1}
Let $N \in \mathbb{N}$ and $\epsilon > 0$.   There exists a pruned tree $\mathscr{P}$ of finite height that is lacunary of order $N$ such that, for \emph{every} sticky map $\sigma: \mathscr{B}^{h(\mathscr{P})} \rightarrow \mathscr{P}$, we have
$$\left|K_\sigma \cap ([1,2] \times \mathbb{R})\right| \geq 1 - \epsilon\;.$$
\end{thm}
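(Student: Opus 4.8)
The plan is to pass to vertical slices and estimate overlaps. By Fubini, $|K_\sigma\cap([1,2]\times\mathbb R)|=\int_1^2\big|\bigcup_{h(v)=M}I_v(t)\big|\,dt$, where $I_v(t)=[a_v+ts_v,\ a_v+ts_v+2^{-M}]$ is the slice of $\rho_v$ at abscissa $t$, with $a_v=\sum_i 2^{-i}j_i$ the left-edge height and $s_v=\sum_i 2^{-i}k_i$ the slope read off from $\sigma(v)=0k_1\cdots k_M$. The $2^M$ slices have length $2^{-M}$ and their left edges $a_v$ exhaust $\{0,2^{-M},\dots,(2^M-1)2^{-M}\}$, so $\sum_v|I_v(t)|=1$ for every $t$; hence $\big|\bigcup_v I_v(t)\big|=1-D(t)$, where $D(t)=\int(N(\cdot,t)-1)^+$ measures the overcounting of the multiplicity $N(x,t)=\#\{v:x\in I_v(t)\}$. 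Thus it suffices to build $\mathscr P$ so that $\int_1^2 D(t)\,dt\le\epsilon$ for every $\sigma$ (we may assume $\epsilon\le1$). Grouping slices by slope, write $\sigma_1<\cdots<\sigma_{2^N}$ for the slopes of the $2^N$ leaves of $\mathscr P$ and set $G_i=\bigcup\{[a_v,a_v+2^{-M}):s_v=\sigma_i\}$; the $G_i$ partition $[0,1]$, and at abscissa $t$ the slope-$\sigma_i$ slices fill the single translate $G_i+t\sigma_i$, which is counted at most once. From $(N-1)^+\le\binom N2$ and Fubini,
$$\int_1^2 D(t)\,dt\ \le\ \sum_{i<j}\int_1^2\big|G_i\cap(G_j+t\delta_{ij})\big|\,dt,\qquad \delta_{ij}:=|\sigma_i-\sigma_j|.$$
The crude estimate $\int_1^2|G_i\cap(G_j+t\delta_{ij})|\,dt\le\alpha_i\alpha_j/\delta_{ij}$ (with $\alpha_i=|G_i|$) is useless when slopes are close, and equally spaced slopes fail too, since the adversary may route half of $\mathscr B^M$ onto two neighbouring slopes; so the slopes must be genuinely close relative to the scale at which the sticky map is able to separate them.

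The decisive structural point is that stickiness forces exactly such separation at the level of positions. Let $\ell_i,\ell_j$ be leaves of $\mathscr P$ whose deepest common ancestor $w$ has height $p_{ij}$. If $u,u'\in\mathscr B^M$ satisfy $\sigma(u)=\ell_i$ and $\sigma(u')=\ell_j$, their height-$(p_{ij}+1)$ ancestors map to the two distinct children of $w$, hence are distinct; consequently the positions contributing to $G_i$ and to $G_j$ lie in disjoint dyadic blocks of length $L:=2^{-(p_{ij}+1)}$. Therefore, for $0\le s\le L$, any point $x\in G_i\cap(G_j+s)$ has $x\in G_i$ and $x-s\in G_j$ in adjacent blocks, so $x$ lies within $s$ above a block boundary. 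Integrating in $t$ and bounding by the total measure of the bottom $2\delta_{ij}$-strips of the $L$-blocks gives, whenever $\delta_{ij}\le2^{-(p_{ij}+2)}$,
$$\int_1^2\big|G_i\cap(G_j+t\delta_{ij})\big|\,dt\ \le\ \Big|G_i\cap\bigcup_b[bL,\,bL+2\delta_{ij})\Big|\ \le\ 2^{\,p_{ij}+2}\,\delta_{ij},$$
using only $G_i\subseteq[0,1]$.

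It remains to construct, for $\epsilon''=\epsilon/\big(4\binom{2^N}2\big)\le\tfrac14$, a pruned tree lacunary of order $N$ of finite height in which every pair of leaves satisfies $\delta_{ij}\le\epsilon''2^{-p_{ij}}$; then each $\delta_{ij}\le2^{-(p_{ij}+2)}$, and summing the last display over the $\binom{2^N}2$ pairs yields $\int_1^2D(t)\,dt\le4\epsilon''\binom{2^N}2=\epsilon$. I would take the balanced stretched binary tree whose $N$ splitting levels sit at heights $0,r,2r,\dots,(N-1)r$, with $r=\lceil\log_2(1/\epsilon'')\rceil+1$ and height $M=Nr$, inserting on each branch a \emph{compensating} run of bits (all $1$'s after a $0$-turn, all $0$'s after a $1$-turn) so that the two children of a split at height $p$ are forced into slivers of width $2^{-(p+r)}$ on the two sides of the midpoint of the length-$2^{-p}$ block. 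Then the leaves below any split at height $p$ have slope-diameter $\le2^{-(p+r-1)}\le\epsilon''2^{-p}$, which gives the required bound for every pair, while the $2^N-1$ splitting vertices form a depth-$(N-1)$ binary tree certifying that $\mathscr P$ is lacunary of order exactly $N$ and pruned.

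The main obstacle is precisely this construction: one must realise slopes that branch in $\mathscr B$ at a prescribed, high level yet remain far closer together than their branch height suggests, which is possible only because of the compensating tails, and one must simultaneously check that the resulting tree is pruned, lacunary of order exactly $N$, and of finite height. The overlap estimate of the second paragraph, which rests on the block-separation consequence of stickiness, is the other essential ingredient; the remaining steps (Fubini, the $\binom N2$ inequality, and the elementary geometry of the slices) are bookkeeping.
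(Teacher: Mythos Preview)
Your argument is correct, and your tree is the same one the paper builds: the authors take block strings $a_i=01\cdots1$ and $b_i=10\cdots0$ of lengths $k_i$, which are exactly your ``compensating runs,'' and your choice of a common block length $r$ is just the special case $k_1=\cdots=k_N=r$. The difference lies in how the overlap is bounded. The paper works directly with the parallelograms: it counts, scale by scale, how many $\rho_v$ can meet some $\rho_w$ when $v$ and $w$ first separate at a dyadic level of size $2^{-(k_1+\cdots+k_j)}$, and arrives at the $N$-term bound $8\sum_{j=1}^N 2^{-k_j}$ on the fraction of parallelograms that overlap at all. You instead slice by Fubini, group slices by slope (equivalently, by leaf of $\mathscr P$), and bound each of the $\binom{2^N}{2}$ pairwise integrals $\int_1^2|G_i\cap(G_j+t\delta_{ij})|\,dt$ via the block separation that stickiness forces. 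Both routes rest on the same structural observation---stickiness confines distinct slopes to disjoint dyadic blocks of size $2^{-(p_{ij}+1)}$, while the compensating tails make $\delta_{ij}$ tiny relative to $2^{-p_{ij}}$---so they are close cousins rather than genuinely different strategies. The paper's level-wise count is quantitatively tighter (tree height $\sim N\log(N/\epsilon)$ versus your $\sim N^2+N\log(1/\epsilon)$, because you sum over $\binom{2^N}{2}$ leaf pairs rather than $N$ scales), but your slice formulation makes the role of stickiness and the pairwise mechanism behind the estimate more explicit.
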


In contrasting this result with Claim 7(B) of \cite{bateman}, it is helpful to recognize that, as indicated in \cite {hrs2024}, the proof of Claim 7(B) implicitly relies on an assumption that $\mathscr{P}$ satisfies a separation condition.   Theorem \ref{t1} indicates what can happen if such a separation condition is not satisfied.

\section{Proof of Theorem \ref{t1}}

\begin{proof}[Proof of Theorem \ref{t1}]
Let $N \in \mathbb{N}$ and $\epsilon > 0$.  Let $k_1, \ldots, k_N$ be a sequence of natural numbers, all greater than 2, such that
$$8[2^{-k_1} + \cdots + 2^{-k_N}]  < \epsilon\;.$$

For $1 \leq i \leq N$, let $a_i$ denote the $k_i$-string $0111\cdots1$ and $b_i$ denote the $k_i$-string $1000\cdots0$.  For example, if  $k_1 =5$, then $a_1 = 01111$ and $b_1 = 10000$ . Let $\mathscr{P}$ be the pruned tree consisting of all of the vertices of the form $0s_1 \cdots s_n$ for $1 \leq n \leq N$, where each $s_i$ is either the string $a_i$ or $b_i$, together with the ancestors of these vertices. Note that $\mathscr{P}$ is lacunary of order $N$, and $\mathscr{P}$ is a tree of height $k_1 + \cdots + k_N$.

Let $\sigma: \mathscr{B}^{h(\mathscr{P})} \rightarrow \mathscr{P}
.$   Note $K_\sigma$ is the union of $2^{h(\mathscr{P})}$ parallelograms of the form $\rho_v$ indicated above, where $v$ is an element of $\mathscr{B}$ of height $h(\mathscr{P})$. 

Let $\gamma$ denote the number of  parallelograms $\rho_v$ for which there is a $w \neq v$ such that $|\rho_v \cap \rho_w| > 0\;.$ Since $\sigma$ is a sticky map and by the structure of $\mathscr{P}$, we have that, for $1 \leq l \leq N-1$,  if  $\rho_u$ and $\rho_v$ are parallelograms whose left hand sides lie in a dyadic interval of length $2^{-k_1 - k_2 - \cdots  - k_l}$ on the $y$-axis, their slopes are within $2^{-k_1 - k_2 - \cdots - k_l} \cdot 2\cdot2^{-k_{l+1}}$ of each other.   Also, all of the  $\rho_v$ have slopes within $2 \cdot 2^{-k_1}$ of each other. Accordingly we have the bound 
\begin{align} \gamma &\leq 2^{h(\mathscr{P})}8[2^{-k_1} + 2^{k_1}2^{-k_1 -k_2} + 2^{k_1 + k_2}2^{-k_1 - k_2 -k_3}  +\cdots + 2^{k_1 + \cdots + k_{N-1}}2^{-k_1 - \cdots -k_N}] \notag\\&\;\;\;= 2^{h(\mathscr{P})}8[2^{-k_1} + \cdots + 2^{-k_N}]\notag\;.\end{align}

A few remarks justifying the above inequality are in order.   Note that if $|\rho_u \cap \rho_v| \neq 0$, then $u$ and  $v$ are in different  halves of [0,1] or are in different halves of  a dyadic interval of length $2^{-k_1 - \cdots - k_j}$ for some $1 \leq j  \leq N-1$ (as otherwise $\rho_u$ and $\rho_v$ would be parallel.)     The number of parallelograms $\rho_u$ whose interior  can intersect the interior of a parallelogram $\rho_v$ where $u$ and $v$ are on opposite halves of $[0,1]$ is $2^{h(\mathscr{P})} \cdot 8 \cdot 2^{-k_1}\;.$   Similarly, given a dyadic interval in $[0,1]$ of length $2^{-k_1 - \cdots - k_{j-1}}$  (and there are $2^{k_1 + \cdots + k_{j-1}}$ of these), there are $2^{h(\mathscr{P})}  \cdot 8 \cdot 2^{-k_1 - \cdots - k_{j}}$  parallelograms $\rho_u$  that can intersect the interior of a parallelogram $\rho_v$ where $u$ and $v$ are on opposite halves of the interval.  Summing over the initial interval $[0,1]$ and all of the dyadic intervals in  $[0,1]$ with length of the form $2^{-k_1 - \cdots - k_j}$ for $1 \leq j \leq N-1$ yields the desired estimate.
\\

Since for each parallelogram $\rho_v$ we have $|\rho_v \cap ([1,2]\times \mathbb{R})| = 2^{-h(\mathscr{P})}$, we have that 
\begin{align}
&|K_\sigma \cap ( [1,2]\times \mathbb{R})| \notag \\ & \;\;\;\geq  1 -  \gamma 2^{-h(\mathscr{P})}\notag\\&\;\;\; \geq 1 - 8[2^{-k_1} + \cdots + 2^{-k_N}] \notag\\&\;\;\; > 1 - \epsilon\;,\notag\end{align}
as desired.
\end{proof}
\section{future directions}
It is highly desirable to ascertain whether the  maximal operator $M_\Omega$ is bounded on $L^p(\mathbb{R}^2)$ for all $1 < p \leq \infty$ if and only if $\Omega$ is a union of finitely many sets each of finite lacunary order.   The proof of the above theorem suggests the following model case of consideration:
\\

Let $k_1, k_2, \ldots $ be an infinite sequence of natural numbers greater than or equal to 2  such that 

$$2^{-k_1} + 2^{-k_2} + 2^{-k_3} + \cdots < \infty$$
and let, as before, $a_i$ denote the $k_i$-string $0111\cdots1$ and $b_i$ denote the $k_i$-string $1000\cdots0$.  Let $\Omega \subset [0,1]$ be the set of points with binary expansions corresponding to sequences of the form $0s_1s_2s_3\cdots s_N$, where $N$ is arbitrary in $\mathbb{N}$ and each $s_i$ is either of the form $a_i$ or $b_i$.  $\Omega$ is not finite lacunary, and hence one can not use the Sj\"ogren-Sj\"olin result in \cite{sjsj} to prove that $M_\Omega$ is bounded on $L^p(\mathbb{R}^2)$.  However, $\Omega$ also does not satisfy the separation condition found in \cite{hrs2024} that would imply that $M_\Omega$ is unbounded on $L^p(\mathbb{R}^2)$ for all $1 \leq p < \infty$.   $\Omega$ having a rather straightforward structure, however, suggests that determining whether $M_\Omega$ is bounded on $L^p(\mathbb{R}^2)$ for all $1 < p \leq \infty$ provides a good starting point for investigating the above problem.


\begin{thebibliography}{99}


\bib{bateman}{article}{
author = {M. Bateman},
journal = {Duke Math. J.},
volume = {147},
year = {2009},
pages = {55--77},
title = {Kakeya sets and directional maximal operators in the plane},
review = {\MR{2494456}},
}

\bib{batemankatz}{article}{
author = {M. Bateman},
author = {N. H. Katz},
journal = {Math. Res. Lett.},
volume = {15},
year = {2008},
pages = {73--81},
title = {Kakeya sets in Cantor directions},
review = {\MR{2367175}},
}






\bibitem{busemannfeller}
 H. Busemann and W. Feller, \emph{Zur Differentiation der Lebesguesche Integrale}, Fund. Math. {\bf{22}} (1934), 226--256.
 
\bibitem{cf2}
A.~C\'ordoba and R.~Fefferman, \emph{On differentiation of
integrals}, Proc. Nat. Acad. Sci. {\bf{74}}\,(1977), 2211--2213.































\bibitem{hrs2024}
P. Hagelstein, B. Radilllo-Murguia, and A. Stokolos, Probabilistic construction of Kakeya-type sets in $\mathbb{R}^2$ associated to separated sets of directions, Duke Math. J. (to appear), arXiv:2405.17674







\bib{nsw}{article}{
author = {A. Nagel},
author = {E. M. Stein},
author = {S. Wainger},
title = {Differentiation in lacunary directions},
journal = {Proc. Nat. Acad. Sci. \mbox{U. S. A.}},
volume = {75},
year = {1978}, 
pages = {1060--1062},
review = {\MR{0466470}},
}


\bibitem{nikodym}
O. Nikodym, \emph{Sur les ensembles accessibles}, Fund. Math. {\bf{10}} (1927), 116--168.


\bib{sjsj}{article}{
author = {P. Sj\"ogren},
author = {P. Sj\"olin},
journal = {Ann. Inst. Fourier (Grenoble)},
volume = {31},
year = {1981},
title = {Littlewood-Paley decompositions and Fouier multipliers with singularities on certain sets},
pages = {157--175},
review = {\MR{0613033}},
}










\bibitem{stromberg}
J.-O. Str\"omberg, \emph{Weak estimates on maximal functions with rectangles in certain directions}, Ark. Math. {\bf{15}} (1977), 229--240.



\end{thebibliography}
\end{document}